\title{Sharp Steklov upper bound for submanifolds of revolution}
\author{Bruno Colbois and Sheela Verma}
\date{\today}
\newtheorem{defi}{Definition}
\newtheorem{theorem}[defi]{Theorem}
 \newtheorem{prop}[defi]{Proposition}
\newcommand{\N}{\mathbb N}
\begin{document}
\maketitle
\large
\noindent

\abstract In this note, we find a sharp upper bound for the Steklov spectrum on a submanifold of revolution in Euclidean space with one boundary component.

\bigskip
\noindent {\it Classification AMS $2000$}: 35P15, 58C40\newline
{\it Keywords}: Steklov problem, Euclidean space, submanifold of revolution, sharp upper bound

\section{Introduction}
The Steklov eigenvalues of a smooth, compact, connected Riemannian
manifold $(M,g)$ of dimension $n\geq 2$ with boundary $\Sigma$ are the real
numbers $\sigma$ for which there exists a nonzero harmonic function
$f:M \rightarrow \mathbb R$ which satisfies $\partial_\nu f=\sigma f$ on the
boundary $\Sigma$. Here and in what follows,
$\partial_\nu$ is the outward-pointing normal derivative on $\Sigma$. The Steklov
eigenvalues form a discrete sequence
$0=\sigma_0<\sigma_1\leq\sigma_2\leq\cdots\nearrow\infty,$
where each eigenvalue is repeated according to its multiplicity.

Recently, relationships between the geometry of the boundary $\Sigma$ and the spectrum have been very much investigated. In \cite{CEG2}, the authors show that fixing only the geometry of the boundary and letting the Riemannian metric inside $M$ be arbitrary does not suffice to control the Steklov eigenvalues: they can be as large or as small as one wishes. On the other hand, it was shown in \cite{CGH, Xi} that fixing $g$ in a neighborhood of $\Sigma$ has a much stronger influence on the spectrum.

In \cite{CGG}, the authors consider $n$-dimensional submanifolds of revolution $M$ of the Euclidean space $\mathbb R^{n+1}$ with one boundary component $\mathbb S^{n-1}\subset \mathbb R^n\times \{0\}$. They show the sharp lower bound
$$
\sigma_k(M)\ge \sigma_k(\mathbb B^n), \quad \mbox{ for \ each } k \in \mathbb{N}\cup \{0\},
$$
where $\mathbb B^n$ denotes the submanifold of revolution given by the $n$ dimensional Euclidean ball. For $n\ge 3$, in the case of equality for one of the eigenvalues $\sigma_k$, $k\ge 1$, $M$ has to be isometric to $\mathbb B^n$. They also find an upper bound for $\sigma_k$, but it is not sharp. Note that, for $n=2$, all submanifold of revolution with boundary $S^1$ are isospectral (see \cite{CGG}, Proposition 1.10).

\medskip
The goal of this work is to investigate sharp upper bounds for submanifolds of revolution $M$ in Euclidean space $\mathbb R^{n+1}$ with one boundary component $\mathbb S^{n-1}\subset \mathbb R^n\times \{0\}$. We denote by
$$
0=\sigma_{(0)}(M)<\sigma_{(1)}(M) < \sigma_{(2)}(M)<...
$$
the \emph{distinct} (counted without multiplicity) eigenvalues of the submanifold of revolution $M$. Now we state our main result.

\begin{theorem} \label{main} Let $M \subset \mathbb R^{n+1}$ be an $n$-dimensional submanifold of revolution with one boundary component isometric to the round sphere $\mathbb S^{n-1}$. Then for $n \ge 3$, we have for each $k \ge 1$,
$$
\sigma_{(k)}(M) < k+n-2.
$$

Moreover, these bounds are sharp. For each $\epsilon >0$ and each $k\ge 1$, there exists a submanifold of revolution $M_{\epsilon}$ such that
$\sigma_{(k)} (M_{\epsilon}) > k+n-2-\epsilon.$

However, the inequality is strict: for each k, there does not exist submanifolds of revolution $M$ such that $\sigma_{(k)}(M)=k+n-2$.
\end{theorem}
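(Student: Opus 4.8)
The plan is to reduce the problem to the one–dimensional radial equation governing each spherical-harmonic sector, and then to run a Riccati comparison against an explicit barrier; the very same comparison will give both the inequality $\sigma_{(k)}(M)<k+n-2$ and its strictness, while an explicit degenerating family will give the sharpness. First I would record the separation of variables. Writing the metric of $M$ in arc-length form $g=dt^{2}+r(t)^{2}g_{\mathbb S^{n-1}}$ on $[0,L]\times\mathbb S^{n-1}$, with $r(0)=1$ at the boundary, $r(L)=0$ at the pole, and the submanifold-of-revolution constraint $|r'|\le 1$, a degree-$k$ harmonic function $f=\phi(t)Y_{k}$ solves $(r^{n-1}\phi')'=k(k+n-2)\,r^{n-3}\phi$ with the regular behaviour $\phi\sim r^{k}$ at the pole. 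As follows from the analysis of this radial problem (see \cite{CGG}), the associated distinct Steklov eigenvalue is $\sigma_{(k)}(M)=-\phi'(0)/\phi(0)$, where $\phi>0$ on $[0,L)$ is the regular solution (a short maximum-principle argument shows $\phi$ has no interior zero).

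Next I would set $q(t)=r^{n-1}\phi'/\phi$, which is finite since $\phi$ does not vanish on $[0,L)$, and which satisfies the Riccati equation $q'=k(k+n-2)r^{n-3}-q^{2}/r^{n-1}$ with $q(L)=0$ and $\sigma_{(k)}(M)=-q(0)$. The heart of the argument is the barrier $\bar q(t)=-(k+n-2)\,r(t)^{n-2}$. A direct computation gives
\[
\bar q'-\Bigl(k(k+n-2)r^{n-3}-\tfrac{\bar q^{2}}{r^{n-1}}\Bigr)=(k+n-2)(n-2)\,r^{n-3}\,(1-r'),
\]
which is nonnegative precisely because $r'\le 1$; hence $\bar q$ is a supersolution. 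Since near the pole $q\sim -k\,r^{n-2}$ while $\bar q\sim -(k+n-2)r^{n-2}$ and $k<k+n-2$, one has $q>\bar q$ just off the pole, so the comparison principle (run from the pole towards the boundary) yields $q(0)>\bar q(0)=-(k+n-2)$, i.e. $\sigma_{(k)}(M)<k+n-2$. A Gronwall estimate for $\eta=q-\bar q$, whose source is the nonnegative term above, keeps the inequality strict all the way to $t=0$; this single argument proves both the stated bound and its non-attainment.

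For sharpness I would exhibit an explicit family $M_{R}$. Take the profile that leaves the boundary radially, with $r$ increasing from $1$ to $R$ at unit speed (a flat annulus $\{z=0,\ 1\le r\le R\}$), and then cap it off by one fixed dome rescaled to size $R$ so as to reach the pole. On the flat annulus the degree-$k$ harmonics are $\phi=A\,r^{-(k+n-2)}+B\,r^{k}$, and matching to the regular solution on the self-similar cap at $r=R$ forces $B/A=O\bigl(R^{-(2k+n-2)}\bigr)\to 0$. Consequently $\sigma_{(k)}(M_{R})=\bigl((k+n-2)-k(B/A)\bigr)/(1+B/A)\to k+n-2$ from below, so for every $\epsilon>0$ a sufficiently large $R$ gives $\sigma_{(k)}(M_{R})>k+n-2-\epsilon$, consistently with the strict bound.

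I expect the main obstacle to be the comparison at the singular endpoint: the coefficients $r^{n-3}$ and $r^{-(n-1)}$ degenerate at the pole, so the comparison must be initiated just off $r=0$ using the precise regular asymptotics $q\sim -k\,r^{n-2}$, and the strictness Gronwall step must be carried out on a compact subinterval bounded away from the pole, where $r$ is bounded below. A secondary technical point is to check that the capped profiles $M_{R}$ are genuine smooth submanifolds of revolution with a single boundary component, and that the cap's matching constant does not annihilate the $r^{-(k+n-2)}$ mode; both are handled by smoothing the junction and, if necessary, perturbing the dome.
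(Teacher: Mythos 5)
Your proposal is correct in its essentials, but it follows a genuinely different route from the paper. The paper never touches the radial ODE for $M$ itself: it proves a monotonicity statement (Proposition \ref{comparison}: enlarging the profile $h$ increases the mixed Steklov--Neumann eigenvalues of the truncated manifold), upgrades it via Ann\'e's convergence theorem to the strict increase $\sigma_{(k)}(M_1)<\sigma_{(k)}(M_2)$ of Proposition \ref{increase}, iterates this to produce a strictly increasing sequence $\sigma_{(k)}(M_i)$, and then squeezes $\sigma_{(k)}(M_i)$ between the explicitly computed Steklov--Neumann and Steklov--Dirichlet eigenvalues of the Euclidean annulus $B_{L}\setminus\overline{B_1}$, both of which tend to $k+n-2$; the bound, its strictness, and the sharpness all drop out of ``a strictly increasing sequence converging to $k+n-2$''. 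Your argument instead attacks the radial equation directly: the Riccati variable $q=r^{n-1}\phi'/\phi$ with the supersolution $\bar q=-(k+n-2)r^{n-2}$, whose defect $(k+n-2)(n-2)r^{n-3}(1-r')\ge 0$ is exactly where the submanifold constraint $|r'|\le1$ enters, yields $\sigma_{(k)}(M)=-q(0)<-\bar q(0)=k+n-2$ together with strictness in one stroke (I checked the barrier computation; it is right). What your approach buys is self-containedness for the upper bound — no Ann\'e-type spectral convergence, no annulus formulas, no iteration — and it pinpoints that equality would force $r'\equiv1$, which is impossible for a compact profile. What the paper's approach buys is softness: it is purely variational near the pole, whereas you must justify the Frobenius asymptotics $\phi\sim c(L-t)^k$, $q\sim -k\,r^{n-2}$ with $c\neq0$ to initialize the comparison just off the degenerate endpoint, and you must invoke (as the paper does via its Proposition \ref{prop: multiplicity of Steklov} and \cite{CGG}) the strict monotonicity in $k$ of the sector eigenvalues to identify $-q(0)$ for the degree-$k$ sector with the $k$-th \emph{distinct} eigenvalue $\sigma_{(k)}(M)$ — without that input your bound on the degree-$k$ sector would not automatically control $\sigma_{(k)}$. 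Your sharpness family (long flat annulus plus a rescaled cap) is essentially the paper's construction; the only detail to nail down is the sign of $B/A$ at the matching, but since your main comparison already gives $\sigma_{(k)}(M_R)<k+n-2$ unconditionally, only the limit $B/A\to0$ is actually needed.
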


\medskip
Note that such bounds exist for abstract revolution metrics on the ball $\mathbb B^n$ if we impose bounds on the curvature of $(M,g)$ (see \cite{Ve}, \cite{Xi1}). Roughly speaking, in \cite{Ve}, the author considers the Steklov problem on a ball with rotationally invariant metric under the assumption that the radial curvature is bounded below (or bounded above) by some real number and proves a two-sided bound for the Steklov eigenvalues. For warped product manifolds with only one boundary component, the author in \cite{Xi1} has obtained a lower bound (upper bound) for the Steklov eigenvalues under the hypothesis that the manifold has nonnegative (nonpositive) Ricci curvature and strictly convex boundary.

\medskip
Theorem \ref{main} will be a consequence of the study of the mixed Steklov Dirichlet and Steklov Neumann spectrum on an annulus and Proposition \ref{increase} which states that given a submanifold of revolution $M_1$ with one boundary component, it is always possible to construct another submanifold of revolution $M_2$ with larger Steklov eigenvalues.

\medskip
The rest of the paper is organized as follows. In Section \ref{section:general}, we present the Steklov and mixed Steklov problems. In Section \ref{revolution}, we consider the specific situation of submanifolds of revolution of Euclidean space with one boundary component. Finally, in Section \ref{proof}, we give the proof of Theorem \ref{main}.

\section{Some general facts about Steklov and mixed
   problems} \label{section:general}
Let $(M,g)$ be a compact Riemannian manifold with boundary $\Sigma$. The Steklov eigenvalues of $(M,g)$ can be characterized by the following variational formula
\begin{equation}
  \label{minmax}
  \sigma_j(M) = \min_{E \in \mathcal{H}_j} \max_{0 \neq f \in E} R_M(f), \quad j\ge 0,
\end{equation}
where $\mathcal{H}_j$ is the set of all $(j+1)$-dimensional subspaces in the Sobolev space $H^1(M)$ and
\begin{equation*}
  \label{Rayleighquotient}
  R_M(f) = \frac{\int_{M} \vert \nabla f \vert^2 dV_M}{\int_{\Sigma} \vert f \vert^2 dV_{\Sigma}}
\end{equation*}
is the Rayleigh quotient.

In order to obtain bounds for $\sigma_j(M)$, we will compare the Steklov spectrum with the spectra of the mixed Steklov-Dirichlet or Steklov-Neumann problems on domains $A \subset M$ such that $\Sigma \subset A$. Let $\partial_{int}A$ denote the intersection of the boundary of $A$ with the interior of $M$. Also, we suppose that it is smooth.

The mixed Steklov-Neumann problem on $A$ is the eigenvalue problem
\begin{gather*}
  \Delta f=0\mbox{ in } {A},\\
  \partial_\nu f=\sigma f\mbox{ on } \Sigma,\qquad
  \partial_\nu f=0 \mbox{ on }\ \partial_{int}A,
\end{gather*}
 where $\nu$ denotes the outward-pointing normal to $\partial A$.
The eigenvalues of this mixed problem form a discrete sequence
$$0=\sigma_0^N(A) \le \sigma_1^N( A)\leq\sigma_2^N(A)\leq\cdots\nearrow\infty,$$
and for each $j\geq 0$, the $j^{\text{th}}$ eigenvalue is given by
\begin{gather*}
 \sigma_j^{N}(A)=\min_{E\in\mathcal{H}_{j}(A)}\max_{0\neq f\in E}\frac{\int_{A}|\nabla f|^2\,dV_A}{\int_{\Sigma}|f|^2\,dV_{\Sigma}},
\end{gather*}
where $\mathcal{H}_{j}(A)$ is the set of all $(j+1)$-dimensional subspaces
in the Sobolev space $H^1(A)$.

\medskip
The mixed Steklov-Dirichlet problem
on $A$ is the eigenvalue problem
\begin{gather*}
  \Delta f=0\mbox{ in } {A},\\
  \partial_\nu f=\sigma f\mbox{ on } \Sigma,\qquad
  f=0 \mbox{ on }\ \partial_{int}A.
\end{gather*}
The eigenvalues of this mixed problem form a discrete sequence
$$0<\sigma_0^D( A)\leq\sigma_1^D(A)\leq\cdots\nearrow\infty,$$
and the $j^{\text{th}}$ eigenvalue is given by
\begin{gather*}
 \sigma_j^{D}(A)=\min_{E\in\mathcal{H}_{j,0}(A)}\max_{0\neq f\in E}\frac{\int_{A}|\nabla f|^2\,dV_A}{\int_{\Sigma}|f|^2\,dV_{\Sigma}},
\end{gather*}
where $\mathcal{H}_{j,0}(M)$ is the set of all $(j+1)$-dimensional subspaces
in the Sobolev space
$H_0^1(A)=\{u\in H^1(A)\,:\, u=0 \mbox{ on }\partial_{int}A\}.$

\medskip

For each $j\in\N$, comparisons between the variational formulae 
give the following bracketing:
\begin{gather}\label{ineq:compmixed}
\sigma_j^N(A) \le \sigma_j(M) \le \sigma_{j}^D(A).
\end{gather}
Note in particular that for $j=0$, we have
$$0=\sigma_0^N(A)=\sigma_0(M)<\sigma_0^D(A).$$

 \section{ Submanifolds of revolution of Euclidean space} \label{revolution} A compact submanifold of revolution $M^n$ with one boundary component is a revolution metric on the $n$-dimensional ball. It can be seen as the warped product $[0,L] \times S^{n-1}$ with the Riemannian metric
$$
g(r,p)=dr^2+h^2(p)g_{0}(p),
$$
where $(r,p)\in [0,L] \times S^{n-1}$, $g_0$ is the canonical metric on the sphere of radius one and $h \in C^{\infty}([0,L])$ satisfies $h>0$ on $[0,L[$, $h'(L)=1$ and $h^{2l}(L)=0$ for all integers $l\geq0$. If we suppose that the boundary is the round sphere of radius one, we also have $h(0)=1$. Moreover, the fact that $M$ is an $n$-dimensional submanifold of revolution of Euclidean space implies
$$
1-r\le h(r) \le 1+r.
$$
For more details, see \cite{CGG}.
\subsection{ Steklov spectrum and the eigenfunctions of submanifold of revolution}

The Steklov spectrum and the eigenfunctions of a submanifold of revolution with one connected component are very well explained in Proposition 8 of \cite{Xi1}. Before proceeding further, we would like to mention that by Laplace-Beltrami operator, we mean $\Delta=- \mbox{ div } \mbox{ grad}$, which is positive, whereas in \cite{Xi1}, the author considers $\Delta = \mbox{ div } \mbox{ grad}$. This explains the difference of the signs in the following.

\begin{prop} \label{prop: multiplicity of Steklov}
Let $M$ be a submanifold of revolution in Euclidean space $\mathbb R^{n+1}$ with one boundary component $\mathbb S^{n-1}\subset \mathbb R^n\times \{0\}$. Then each eigenfunction $f$ of the Steklov problem on $M$ can be written as $f(r,p)=u(r)v(p)$, where $v$ is a spherical harmonic of the sphere $\mathbb S^{n-1}$ of degree $k\ge 0$, i.e.,

\begin{align*}
\Delta v = \lambda_{(k)} v  \mbox { on } \mathbb S^{n-1},
\end{align*}
where $\lambda_{(k)}=k(n-2+k)$ and $u$ is a solution of the equation
\begin{align*}
\frac{1}{h^{n-1}} \frac{d}{dr}(h^{n-1}\frac{d}{dr}u)-\frac{1}{h^2}\lambda_{(k)} u=0
\end{align*}
on $(0,L)$ and under the condition $u(L)=0$. The Steklov eigenvalue $\sigma_{(k)}$ has the same multiplicity as of $\lambda_{(k)}$, the $k^{\text{th}}$ eigenvalue (counted without multiplicity) of the round sphere $\mathbb S^{n-1}$.
\end{prop}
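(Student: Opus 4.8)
The plan is to exploit the warped-product structure $g = dr^2 + h^2(r)g_0$ and to separate variables. First I would write the Laplace--Beltrami operator, with the sign convention $\Delta = -\operatorname{div}\operatorname{grad}$ used here, in these coordinates as
\begin{equation*}
\Delta f = -\frac{1}{h^{n-1}}\frac{d}{dr}\Big(h^{n-1}\frac{\partial f}{\partial r}\Big) + \frac{1}{h^2}\,\Delta_{\mathbb{S}^{n-1}} f,
\end{equation*}
where $\Delta_{\mathbb{S}^{n-1}}$ is the (positive) Laplacian of the round unit sphere acting in the variable $p$. Since the spherical harmonics $\{v_{k,m}\}$ form a complete orthonormal basis of $L^2(\mathbb{S}^{n-1})$ with $\Delta_{\mathbb{S}^{n-1}} v_{k,m} = \lambda_{(k)} v_{k,m}$ and $\lambda_{(k)} = k(n-2+k)$, any $f \in H^1(M)$ expands as $f(r,p) = \sum_{k,m} u_{k,m}(r)\,v_{k,m}(p)$. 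The key observation is that $\Delta$ is diagonal for this decomposition: it sends the mode $u_{k,m}(r)v_{k,m}(p)$ to $\big(-\frac{1}{h^{n-1}}(h^{n-1}u_{k,m}')' + \frac{\lambda_{(k)}}{h^2}u_{k,m}\big)v_{k,m}$, and the Steklov boundary operator on $\Sigma = \{r=0\}$ differentiates only in $r$, so it too preserves each mode. Hence every nonzero mode of a Steklov eigenfunction is itself a Steklov eigenfunction with the same eigenvalue, which gives the separation $f = u(r)v(p)$ and the stated radial equation on $(0,L)$.

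Next I would pin down the two endpoint conditions. The sphere collapses at the apex $r=L$ (where $h(L)=0$), so $r=L$ is a regular singular point of the ODE. Setting $s=L-r$ and using $h(s)\sim s$ near the apex, the equation becomes, to leading order, the Euler equation $u_{ss} + \frac{n-1}{s}u_s - \frac{\lambda_{(k)}}{s^2}u = 0$, whose indicial equation $\alpha^2 + (n-2)\alpha - k(n-2+k)=0$ has roots $\alpha = k$ and $\alpha = -(k+n-2)$. The branch behaving like $s^{-(k+n-2)}$ does not extend to a function on the smooth manifold $M$, whereas the one behaving like $s^{k}$ does; for $k\ge 1$ this regular solution satisfies $u(L)=0$, which is precisely the stated condition (the case $k=0$ gives the constant solution and $\sigma_{(0)}=0$). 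This requirement selects, up to a scalar, a unique radial solution $u_k$, and since $\sigma$ enters only through the Steklov condition at $r=0$, the eigenvalue is read off as $\sigma_{(k)} = -u_k'(0)/u_k(0)$; the denominator is nonzero because for $k\ge 1$ the operator $-\frac{1}{h^{n-1}}(h^{n-1}u')' + \frac{\lambda_{(k)}}{h^2}u$ is positive, so no nontrivial solution vanishes at both endpoints.

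Finally I would assemble the multiplicity count. For fixed $k$ the solution $u_k$ is unique up to a constant, so the eigenspace coming from degree $k$ is exactly $\{u_k(r)v(p) : \Delta_{\mathbb{S}^{n-1}} v = \lambda_{(k)} v\}$, of dimension equal to the multiplicity of $\lambda_{(k)}$; together with the decoupling of modes this identifies the full eigenspace of $\sigma_{(k)}$. To conclude that distinct degrees yield distinct Steklov eigenvalues I would show that $k\mapsto \sigma_{(k)}$ is strictly increasing, by a comparison argument for the radial problem as the potential $\lambda_{(k)}/h^2$ grows with $k$. I expect the real obstacle to be the regularity analysis at the apex: one must rigorously pass from the formal indicial behaviour to the statement that only the $s^{k}$ branch lies in $H^1(M)$ (and is smooth on $M$), and one must justify that the spherical-harmonic expansion of a general $H^1$ eigenfunction converges well enough to license the term-by-term separation. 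These two points, standard for warped products, are where the work concentrates; the ODE computation and the multiplicity bookkeeping are then routine.
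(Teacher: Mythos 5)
Your proposal is correct and follows essentially the same route as the paper, which itself only sketches the separation of variables in the warped product $dr^2+h^2(r)g_0$ and defers the full argument to Proposition 8 of \cite{Xi1}. You supply more of the details the paper omits --- the indicial analysis at the apex justifying $u(L)=0$, the positivity argument ruling out $u(0)=0$, and the monotonicity of $k\mapsto\sigma_{(k)}$ needed for the multiplicity count --- all of which are the standard ingredients of the cited proof.
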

For a proof and more details, see \cite{Xi1}.
\medskip

Roughly speaking, this comes from the fact that

\begin{align*}
\Delta (u(r)v(p))=-u''(r)v(p)-\frac{(n-1)h'}{h}u'(r)v(p)+\frac{u(r)}{h^2}\Delta_{\mathbb S^{n-1}} v(p).
\end{align*}
If $v$ is a $k^{\text{th}}$ eigenfunction of the sphere $\mathbb S^{n-1}$ (counted without multiplicity), we obtain
\begin{align*}
\Delta (u(r)v(p)) = -\frac{1}{h^{n-1}} \frac{d}{dr}(h^{n-1}\frac{d}{dr}u)v(p)+\frac{u(r)}{h^2}\lambda_{(k)} v(p),
\end{align*}
and, because $f$ is harmonic, we have
\begin{align*}
-\frac{1}{h^{n-1}} \frac{d}{dr}(h^{n-1}\frac{d}{dr}u)+ \lambda_{(k)}\frac{u(r)}{h^2} =0.
\end{align*}
The condition $u(L)=0$ comes from the fact that $f$ has to be smooth at the point where $r=L$.

Using similar arguments used in Proposition 8 of \cite{Xi1}, we can also conclude the following proposition.

\begin{prop} \label{prop: multiplicity of the mixed Steklov}
On a submanifold of revolution $M\subset \mathbb R^{n+1}$, the multiplicity of the $k^{\text{th}}$ mixed Steklov Dirichlet eigenvalue (counted without multiplicity) $\sigma^{D}_{(k)}$ and the $k^{\text{th}}$ mixed Steklov Neumann eigenvalue (counted without multiplicity) $\sigma^{N}_{(k)}$ has the same multiplicity as of $\lambda_{(k)}$, the $k^{\text{th}}$ eigenvalue (counted without multiplicity) of the round sphere $\mathbb S^{n-1}$.
\end{prop}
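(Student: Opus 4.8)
\medskip

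The plan is to repeat, almost word for word, the separation of variables already used for the pure Steklov problem, changing only the condition imposed on the interior boundary. Work on the rotationally invariant annulus $A=\{0\le r\le R\}=[0,R]\times\mathbb{S}^{n-1}$ with $R<L$, so that $\partial_{int}A=\{r=R\}$ is a sphere lying strictly before the tip (where $h>0$). Writing an eigenfunction as $f(r,p)=u(r)v(p)$ with $v$ a spherical harmonic of degree $k$, the computation of $\Delta(u(r)v(p))$ displayed above shows that harmonicity is equivalent to the radial equation
$$-\frac{1}{h^{n-1}}\frac{d}{dr}\Big(h^{n-1}\frac{d}{dr}u\Big)+\lambda_{(k)}\frac{u}{h^2}=0 \quad \mbox{on } (0,R).$$
No spectral parameter appears here; $\sigma$ enters only through the Steklov condition at $r=0$, which, since the outward normal there is $-\partial/\partial r$, reads $-u'(0)=\sigma\,u(0)$. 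The interior condition becomes $u(R)=0$ in the Dirichlet case and $u'(R)=0$ in the Neumann case.

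Next I would show that each degree $k$ contributes exactly one eigenvalue. Since $h>0$ on $[0,R]$, the equation is regular and its solution space is two-dimensional; the interior condition is a single linear constraint which, by uniqueness for the ODE, cuts this space down to a one-dimensional space of admissible radial profiles. On this line $u(0)\neq 0$ (otherwise $u(0)=u'(0)=0$ would give $u\equiv 0$), so $\sigma=-u'(0)/u(0)$ is a single number determined by $k$. Equivalently, on the degree-$k$ sector the mixed Dirichlet-to-Neumann map is a scalar, because the boundary datum $v$ has a unique harmonic extension to $A$ satisfying the interior condition. Multiplying this unique $u$ by each element of the degree-$k$ spherical harmonic eigenspace yields a number of independent eigenfunctions equal to the dimension of that eigenspace, i.e. the multiplicity of $\lambda_{(k)}$, and decomposing the boundary trace of an arbitrary eigenfunction into spherical harmonics shows these are all of them. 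Hence the mixed spectrum consists of one value $\sigma^{D}(\lambda_{(k)})$ (resp. $\sigma^{N}(\lambda_{(k)})$) for each $k$, with multiplicity equal to that of $\lambda_{(k)}$.

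Finally I would check that the $k$-th \emph{distinct} mixed eigenvalue is the one coming from degree $k$, which is the only point requiring genuine work. For this, use the Rayleigh quotient of the separated problem,
$$\sigma(\lambda)=\min_{u}\ \frac{\displaystyle\int_0^R\Big[(u')^2+\frac{\lambda}{h^2}u^2\Big]h^{n-1}\,dr}{u(0)^2},$$
the minimum taken over $\{u:u(R)=0\}$ in the Dirichlet case and over all admissible $u$ in the Neumann case. For fixed $u\not\equiv 0$ the numerator is strictly increasing in $\lambda$ (its $\lambda$-dependent part is $\lambda\int_0^R u^2h^{n-3}\,dr$, with $\int_0^R u^2h^{n-3}\,dr>0$ since $h>0$ on $[0,R]$), while the denominator is independent of $\lambda$; comparing the two minimizers then gives $\sigma(\lambda_1)<\sigma(\lambda_2)$ whenever $\lambda_1<\lambda_2$. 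As $\lambda_{(k)}=k(n-2+k)$ is strictly increasing in $k$, the numbers $\sigma^{D}(\lambda_{(0)})<\sigma^{D}(\lambda_{(1)})<\cdots$ (and likewise for $N$) are strictly increasing, hence are exactly the distinct eigenvalues listed in their natural order. This identifies $\sigma^{D}_{(k)}=\sigma^{D}(\lambda_{(k)})$ and $\sigma^{N}_{(k)}=\sigma^{N}(\lambda_{(k)})$ and gives the claimed multiplicities. The separation of variables is routine and identical to the computation recorded above; the main obstacle is precisely this strict monotonicity in the separation constant, needed to match the ordering of the distinct mixed eigenvalues with that of the $\lambda_{(k)}$.
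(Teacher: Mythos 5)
Your proposal follows the route the paper itself intends: the paper gives no written proof of this proposition, deferring to ``similar arguments'' to Proposition 8 of \cite{Xi1}, namely separation of variables, the reduction of each spherical-harmonic sector to a regular radial ODE with a single interior boundary condition (Dirichlet $u(R)=0$ or Neumann $u'(R)=0$ replacing the smoothness condition at the tip), and a monotonicity statement in the separation constant. Your write-up supplies these steps, and you are right that the strict monotonicity of $\sigma(\lambda)$ in $\lambda$, obtained from the Rayleigh quotient, is the genuinely necessary ingredient: without it one cannot identify the $k^{\text{th}}$ \emph{distinct} mixed eigenvalue with the one produced by degree-$k$ spherical harmonics, which is exactly what the multiplicity claim requires.

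One local justification is incorrect as stated, although the conclusion it supports is true. You assert that on the one-dimensional space of admissible radial profiles one has $u(0)\neq 0$, ``otherwise $u(0)=u'(0)=0$ would give $u\equiv 0$''; but that space is cut out by a condition at $r=R$, which gives no control of $u'(0)$, so $u(0)=0$ does not entail $u'(0)=0$ and ODE uniqueness at $r=0$ does not apply. The correct reason is the energy identity: multiplying the radial equation by $u$ and integrating gives $\int_0^R\bigl((u')^2+\lambda_{(k)}u^2/h^2\bigr)h^{n-1}\,dr=\bigl[u\,u'\,h^{n-1}\bigr]_0^R$, and if $u(0)=0$ together with $u(R)=0$ (resp.\ $u'(R)=0$), the right-hand side vanishes and the left-hand side is a sum of nonnegative terms, forcing $u$ to be constant and hence identically zero in every case. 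Alternatively, your own variational characterization produces a minimizer normalized by $u(0)=1$, so the existence of an admissible profile with nonzero boundary value comes for free there. Either repair closes the point, and the rest of the argument stands.
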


In Subsections \ref{SteklovDirichlet} and \ref{SteklovNeumann}, we calculate eigenvalues of the mixed Steklov problems on annular domains and these calculations are directly inspired from \cite{FS4}, paragraph 3.

\subsection{The mixed Steklov-Dirichlet eigenvalues on annular domains}\label{SteklovDirichlet}

\begin{prop}
Let $B_{1}$ and $B_{L}$ be the balls in $\mathbb{R}^{n}$, $n\ge 3$, centered at the origin of radius one and $L$, respectively. Consider the following eigenvalue problem on $\Omega_{0} = B_{L} \setminus \overline{B_{1}} $
\begin{align} \label{Steklov-Dirichlet}
\begin{array}{rcll}
\Delta f &=& 0 &\text{ in }  B_{L}\setminus \overline{B_{1}}, \\
f &=& 0  &\text{ on }  \partial B_{L}, \\
\frac{\partial f}{\partial \nu} &=& \sigma^{D} f  &\text{ on }  \partial B_{1}.
\end{array}
\end{align}
Then for $0 \leq k < \infty$, the $k^{\text{th}}$ eigenvalue (counted without multiplicity) of eigenvalue problem \eqref{Steklov-Dirichlet} is
\begin{align*}
\sigma^{D}_{(k)}(\Omega_{0}) = \frac{k}{L^{2k+n-2} - 1} + \frac{\left( k+n-2 \right) L^{2k+n-2} }{L^{2k+n-2} - 1}.
\end{align*}
\end{prop}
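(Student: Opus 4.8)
The plan is to reduce the eigenvalue problem to a one-dimensional radial problem by separation of variables, exactly as in the computation preceding Proposition \ref{prop: multiplicity of the mixed Steklov}. Since $\Omega_{0}$ is a flat annulus in $\mathbb R^{n}$, in polar coordinates its metric is $dr^{2}+r^{2}g_{0}$, so the warping function is $h(r)=r$. Writing an eigenfunction as $f(r,p)=u(r)v(p)$ with $v$ a spherical harmonic of degree $k$ (so $\Delta_{\mathbb S^{n-1}}v=\lambda_{(k)}v$, $\lambda_{(k)}=k(k+n-2)$), harmonicity $\Delta f=0$ together with the displayed Laplacian formula gives the radial equation
\begin{equation*}
r^{2}u''+(n-1)\,r\,u'-\lambda_{(k)}u=0 .
\end{equation*}

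First I would recognize this as an Euler (equidimensional) equation and look for solutions of the form $u(r)=r^{\alpha}$. The indicial equation is $\alpha^{2}+(n-2)\alpha-k(k+n-2)=0$, whose discriminant $(n-2)^{2}+4k(k+n-2)=(2k+n-2)^{2}$ is a perfect square; hence the two exponents are $\alpha=k$ and $\alpha=-(k+n-2)$, and the general radial solution is
\begin{equation*}
u(r)=A\,r^{k}+B\,r^{-(k+n-2)} .
\end{equation*}

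Next I would impose the two boundary conditions. The Dirichlet condition $u(L)=0$ forces $B=-A\,L^{2k+n-2}$. For the Steklov condition on $\partial B_{1}$, the key point is that the outward normal of $\Omega_{0}$ along the \emph{inner} sphere points toward the origin, i.e. $\partial_{\nu}=-\partial_{r}$ there; thus the condition reads $-u'(1)=\sigma^{D}u(1)$. Substituting $u(1)=A\bigl(1-L^{2k+n-2}\bigr)$ and $u'(1)=A\bigl(k+(k+n-2)L^{2k+n-2}\bigr)$ and solving for $\sigma^{D}$ yields precisely the claimed value
\begin{equation*}
\sigma^{D}_{(k)}(\Omega_{0})=\frac{k+(k+n-2)L^{2k+n-2}}{L^{2k+n-2}-1},
\end{equation*}
which is exactly the two-term expression in the statement.

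The only genuinely delicate points are getting the orientation of the normal right (a sign error here would swap the roles of the two boundaries) and justifying the indexing by $k$: by the spherical-harmonic decomposition (the analog of Proposition \ref{prop: multiplicity of the mixed Steklov}) every eigenvalue arises in this way, so to conclude that the degree-$k$ value is the $k$-th \emph{distinct} eigenvalue I would verify that $k\mapsto\sigma^{D}_{(k)}(\Omega_{0})$ is strictly increasing, a routine estimate on the explicit formula. I expect the main obstacle to be bookkeeping rather than conceptual: confirming completeness of the decomposition and the monotonicity in $k$, while the radial ODE itself is solved in closed form.
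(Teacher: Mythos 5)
Your proposal is correct and follows essentially the same route as the paper: separation of variables into spherical harmonics, the explicit radial solution $u(r)=Ar^{k}+Br^{-(k+n-2)}$, and elimination of the constants via the Dirichlet condition at $r=L$ and the Steklov condition $-u'(1)=\sigma^{D}u(1)$ at the inner sphere. The only (minor) additions are that you derive the radial solutions from the Euler equation rather than quoting them, and that you explicitly flag the monotonicity-in-$k$ issue, which the paper subsumes into its earlier multiplicity proposition.
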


\begin{proof}
The eigenfunctions of \eqref{Steklov-Dirichlet} are of the form $f_k(r,p)=u(r)v(p)$, where $v$ is an eigenfunction for the $k^{\text{th}}$ eigenvalue of the sphere $\mathbb S^{n-1}$ and $u$ is a real-valued function defined on the interval $[1, L]$. For $f_k(r,p)$ to be an eigenfunction corresponding to the $k^{\text{th}}$ eigenvalue (counting without multiplicity) of the mixed Steklov Dirichlet problem on $\Omega_{0}$, $u$ should satisfy the following
\begin{align*}
\begin{array}{lcll}
u(r)= a r^{k} + b r^{-k+2-n}, \text{ for any nonnegative integer } k, \\
u(L) = 0, u'(1) = - \sigma^{D}_{(k)} u(1).
\end{array}
\end{align*}
Since $u'(r)=k ar^{k-1}-(n+k-2)br^{-(n+k-1)}$ for $k>0$, conditions $u(L) = 0$ and $u'(1) = - \sigma^{D}_{(k)} u(1)$ give
\begin{align*}
a L^{k} + b L^{-k+2-n} = 0, \\
k a + (-k+2-n) b = - \sigma^{D}_{(k)} (a + b).
\end{align*}
Eliminating $a$ and $b$ to obtain
\begin{align*}
L^{2k+n-2} (\sigma^{D}_{(k)} - k + 2 -n) = k + \sigma^{D}_{(k)}
\end{align*}
and
\begin{align*}
(L^{2k+n-2}-1) \sigma^{D}_{(k)}  = k + (n+k-2)L^{2k+n-2}.
\end{align*}
This gives the desired result.
\end{proof}

\subsection{The mixed Steklov-Neumann eigenvalues on annular domains}\label{SteklovNeumann}
\begin{prop}
Let $B_{1}$ and $B_{L}$ be the balls in $\mathbb{R}^{n}$, $n\ge 3$, centered at the origin of radius one and $L$, respectively. Consider the following eigenvalue problem on $\Omega_{0} = B_{L} \setminus \overline{B_{1}} $
\begin{align} \label{Steklov-Neumann}
\begin{array}{rcll}
\Delta f &=& 0 &\text{ in }  B_{L}\setminus \overline{B_{1}}, \\
\frac{\partial f}{\partial \nu}&=& 0  &\text{ on }  \partial B_{L}, \\
\frac{\partial f}{\partial \nu} &=& \sigma^{N} f  &\text{ on }  \partial B_{1}.
\end{array}
\end{align}
Then for $0 \leq k < \infty$, the $k^{\text{th}}$ eigenvalue (counted without multiplicity) of eigenvalue problem \eqref{Steklov-Neumann} is
\begin{align*}
\sigma^{N}_{(k)}(\Omega_{0}) = k \frac{(n+k-2)(L^{(n+2k-2)}-1)}{kL^{(n+2k-2)}+(n+k-2)}.
 \end{align*}
\end{prop}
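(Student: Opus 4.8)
The plan is to follow the same separation-of-variables scheme used in the Steklov--Dirichlet case, changing only the boundary conditions. First I would write an eigenfunction as $f_k(r,p)=u(r)v(p)$, where $v$ is a spherical harmonic of degree $k$ on $\mathbb S^{n-1}$ and $u$ is a radial function on $[1,L]$. Since the interior equation $\Delta f=0$ on the annulus $\Omega_0$ is unchanged, the radial part still solves the same Euler-type ODE, so that
\begin{equation*}
u(r)=a\,r^{k}+b\,r^{-k+2-n},\qquad u'(r)=k a\,r^{k-1}-(n+k-2)b\,r^{-(n+k-1)}.
\end{equation*}
The only difference from the Dirichlet computation is the condition imposed on the outer sphere $\partial B_L$.

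Next I would translate the two boundary conditions into linear relations on $a$ and $b$. On the outer boundary the outward normal is $+\partial_r$, so the Neumann condition $\partial_\nu f=0$ on $\partial B_L$ becomes $u'(L)=0$; on the inner boundary the outward normal points toward the origin, i.e.\ $-\partial_r$, so the Steklov condition $\partial_\nu f=\sigma^N f$ on $\partial B_1$ becomes $u'(1)=-\sigma^N u(1)$, exactly as in the Dirichlet case. Evaluating the expressions above gives the homogeneous system
\begin{align*}
k a\,L^{k-1}-(n+k-2)b\,L^{-(n+k-1)} &= 0,\\
k a-(n+k-2)b &= -\sigma^N(a+b).
\end{align*}

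Finally I would eliminate $a$ and $b$. The first equation fixes the ratio $b/a=\frac{k}{n+k-2}L^{n+2k-2}$; substituting this into the second and solving for $\sigma^N$ collapses, after clearing denominators, to the claimed formula. I do not expect a genuine obstacle here: the argument is the elementary analogue of the preceding proposition, and the $k=0$ case is consistent, since the constant function yields $\sigma^N_{(0)}=0$. The only point requiring care is the bookkeeping of signs — in particular the orientation of the outward normal on the inner boundary versus the outer boundary — since a sign slip there would produce the Dirichlet-type ratio instead of the correct one.
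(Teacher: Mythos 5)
Your proposal is correct and follows essentially the same argument as the paper: separation of variables with $u(r)=ar^{k}+br^{-k+2-n}$, the conditions $u'(L)=0$ and $u'(1)=-\sigma^{N}u(1)$, and elimination of $a,b$ to obtain the stated formula. The ratio $b/a=\frac{k}{n+k-2}L^{n+2k-2}$ you compute is exactly what the paper's elimination yields, and your sign conventions for the outward normal match those used there.
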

\begin{proof}
Note that the eigenfunctions $f_k(r,p)$ of \eqref{Steklov-Neumann} can be expressed as $f_k(r,p)=u(r)v(p)$, where $v$ is an eigenfunction for the $k^{\text{th}}$ eigenvalue of the sphere $\mathbb S^{n-1}$ and $u$ is a real-valued function defined on $[1, L]$. If the function $u$ corresponds to the $k^{\text{th}}$ eigenvalue (counting without multiplicity) of the mixed Steklov-Neumann problem on $\Omega_{0}$, then
\begin{align*}
\begin{array}{lcll}
u(r)= a r^{k} + b r^{-k+2-n}, \text{ for any nonnegative integer } k, \\
u'(L) = 0, u'(1) = - \sigma^{N}_{(k)} u(1).
\end{array}
\end{align*}
These conditions give
\begin{align*}
a kL^{k-1} - b (n+k-2)L^{-(k+n-1)} = 0, \\
k a + (-k+2-n) b = - \sigma^{N}_{(k)} (a + b).
\end{align*}
By eliminating $a$ and $b$, we obtain
\begin{align*}
  -(k+\sigma^{N}_{(k)})(n+k-2)L^{-(k+n-1)}+kL^{k-1}(n+k-2-\sigma^{N}_{(k)})=0
\end{align*}
and
\begin{align*}
 \sigma^{N}_{(k)}(kL^{k-1}+(n+k-2)L^{-(k+n-1)})=k(n+k-2)(L^{k-1}-L^{-(k+n-1)}).
\end{align*}
Multiplying by $L^{(n+k-1)}$ to get
\begin{align*}
\sigma^{N}_{(k)}(kL^{(n+2k-2)}+(n+k-2))=k(n+k-2)(L^{(n+2k-2)}-1),
\end{align*}
and
\begin{align*}
\sigma^{N}_{(k)}=k \frac{(n+k-2)(L^{(n+2k-2)}-1)}{kL^{(n+2k-2)}+(n+k-2)}.
\end{align*}
\end{proof}

 \section{Proof of the main theorem} \label{proof}
\subsection{Comparison of submanifolds of revolution}

Recall that for an $n$-dimensional submanifold of revolution $M$ of the Euclidean space $\mathbb R^{n+1}$ with one boundary component $\mathbb S^{n-1}\subset \mathbb R^n\times \{0\}$ the induced Riemannian metric may be written as
$$
g(r,p)=dr^2+h^2(r)g_0(p),
$$
where $g_0$ is the canonical metric of $\mathbb S^{n-1}$, $r \in [0,L]$ and $h(0)=1$, $h(L)=0$, $h(r)>0$ if $0<r<L$, $h'(L)=1$ and $1-r\le h(r) \le 1+r$.\\

\begin{prop} \label{comparison}
Let $M=[0,L]\times \mathbb{S}^{n-1}$, $n \ge 3$ be a Riemannian manifold with metric $g_{i} = dr^{2} + h_{i}^{2}(r) g_{\mathbb{S}^{n-1}}$, $i = 1, 2$. Moreover suppose that $h_{1}(0) = h_{2}(0)=1$ and $h_{1}(r) \le h_{2}(r)$. Consider the mixed Steklov-Neumann problem on $M$ (Steklov at $r = 0$ and Neumann at $r = L$). Then  $\sigma_{k}^{N}(M, g_{1}) \le \sigma_{k}^{N}(M, g_{2})$ for all $k \in \mathbb{N}\cup \{0\}$.
\end{prop}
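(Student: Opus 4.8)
The plan is to argue directly from the variational characterization of $\sigma_k^N$ given in Section \ref{section:general}, comparing the two Rayleigh quotients test function by test function. The point is that passing from $g_2$ to the smaller metric $g_1$ only decreases the Dirichlet energy while leaving the boundary integral unchanged, so every Rayleigh quotient decreases and the min-max formula yields the claimed ordering of the spectra.

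First I would make the two ingredients of the Rayleigh quotient explicit for a warped metric $g=dr^2+h^2 g_{\mathbb{S}^{n-1}}$. Writing $f=f(r,p)$, one has $dV_g=h^{n-1}\,dr\,dV_{\mathbb{S}^{n-1}}$ and $|\nabla f|_g^2=(\partial_r f)^2+h^{-2}|\nabla_{\mathbb{S}^{n-1}}f|^2$, so that
\[
\int_M |\nabla f|_g^2\,dV_g=\int_0^L\!\!\int_{\mathbb{S}^{n-1}}\Big(h^{n-1}(\partial_r f)^2+h^{n-3}|\nabla_{\mathbb{S}^{n-1}}f|^2\Big)\,dV_{\mathbb{S}^{n-1}}\,dr .
\]
The Steklov part of the boundary is $\Sigma=\{0\}\times\mathbb{S}^{n-1}$, whose induced metric is $h(0)^2 g_{\mathbb{S}^{n-1}}=g_{\mathbb{S}^{n-1}}$ because $h_1(0)=h_2(0)=1$; hence the denominator $\int_\Sigma f^2\,dV_\Sigma=\int_{\mathbb{S}^{n-1}}f(0,\cdot)^2\,dV_{\mathbb{S}^{n-1}}$ is \emph{the same} for $g_1$ and $g_2$.

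The key step is the monotonicity of the numerator. Since $0<h_1\le h_2$ and $n\ge 3$, both exponents satisfy $n-1\ge 2>0$ and $n-3\ge 0$, so $h_1^{n-1}\le h_2^{n-1}$ and $h_1^{n-3}\le h_2^{n-3}$ pointwise on $[0,L]$. Inserting this into the displayed integral shows that for every fixed $f$ the $g_1$-Dirichlet energy is at most the $g_2$-Dirichlet energy; together with the equality of denominators, this gives $R_{g_1}(f)\le R_{g_2}(f)$ for every admissible $f$. I would emphasize that the hypothesis $n\ge 3$ is used precisely here, to keep the coefficient $h^{n-3}$ nondecreasing in $h$; for $n=2$ this coefficient is $h^{-1}$ and the argument breaks down, in agreement with the isospectrality of surfaces of revolution noted in the introduction.

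Finally I would feed this into the min-max formula. The same pointwise bounds give $\|f\|_{H^1(M,g_1)}\le\|f\|_{H^1(M,g_2)}$, hence $H^1(M,g_2)\subseteq H^1(M,g_1)$, so any admissible subspace for $g_2$ is also admissible for $g_1$. Choosing a $(k+1)$-dimensional subspace $E$ realizing the minimum for $(M,g_2)$ and using it as a test space for $(M,g_1)$ yields
\[
\sigma_k^N(M,g_1)\le\max_{0\neq f\in E}R_{g_1}(f)\le\max_{0\neq f\in E}R_{g_2}(f)=\sigma_k^N(M,g_2),
\]
which is the assertion. The only genuinely delicate points are the monotonicity of both weights (where $n\ge 3$ is essential) and this admissibility of test spaces across the two metrics, which need not be quasi-isometric when $h_i(L)=0$ but are nevertheless comparable in the single direction $H^1(M,g_2)\subseteq H^1(M,g_1)$ that the argument requires.
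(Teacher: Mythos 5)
Your proposal is correct and follows essentially the same route as the paper: write the Rayleigh quotient for the warped metric, observe that the weights $h^{n-1}$ and $h^{n-3}$ are both nondecreasing in $h$ for $n\ge 3$ while the boundary integral is unchanged since $h_1(0)=h_2(0)=1$, and conclude via min-max. Your extra remarks on where $n\ge 3$ enters and on the inclusion $H^1(M,g_2)\subseteq H^1(M,g_1)$ make explicit details the paper leaves implicit, but do not change the argument.
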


\begin{proof}
The Rayleigh quotient of a function $f(r,p)$ defined on $M$ is given by
\begin{align*}
R_{g_{i}}(f) = \frac{\int_{0}^{L} \int_{\mathbb{S}^{n-1}} \left( \left( \frac{\partial f}{\partial r}\right)^{2} + \frac{1}{h_{i}^{2}(r)} \|\bar{\nabla} f \|^{2}\right)h_{i}^{n-1}(r) dr \ dv_{g_{\mathbb{S}^{n-1}}}}{\int_{\mathbb{S}^{n-1} \times \{0\}}f^{2}(0,p) dv_{g_{\mathbb{S}^{n-1}}}},
\end{align*}
where $\bar{\nabla}$ is the exterior derivative in the direction of $\mathbb{S}^{n-1}$. The condition $h_{1}(r) \le h_{2}(r)$ gives that $R_{g_{1}}(f) \le R_{g_{2}}(f)$. Hence, we have $\sigma_{k}^{N}(M, g_{1}) \le \sigma_{k}^{N}(M, g_{2})$ for all $k \in \mathbb{N}\cup \{0\}$.
\end{proof}
\vspace{0.3 cm}

\begin{prop} \label{increase}
For any submanifold of revolution $(M_{1}, g_{1}) \subset \mathbb{R}^{n+1}$, with boundary $\mathbb{S}^{n-1} \times \left\lbrace 0 \right\rbrace $, there exists a submanifold of revolution $(M_{2}, g_{2}) \subset \mathbb{R}^{n+1}$ with the same boundary such that, for all $k \ge 1$, $\sigma_{(k)}(M_{2}) > \sigma_{(k)}(M_{1})$.
\end{prop}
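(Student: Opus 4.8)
The plan is to realize each distinct Steklov eigenvalue as the minimum of a one--dimensional radial Rayleigh quotient and to exploit its pointwise monotonicity in the warping function $h$. By Proposition \ref{prop: multiplicity of Steklov}, the mode-$k$ part of the Steklov spectrum is governed by the radial equation $\frac{1}{h^{n-1}}(h^{n-1}u')'=\frac{\lambda_{(k)}}{h^{2}}u$ on $(0,L)$ with $u(L)=0$, and for each spherical degree $k$ the Dirichlet-to-Neumann operator acts on the degree-$k$ harmonics as a single scalar. Hence
\[
\sigma_{(k)}(M)=\min\Big\{R^{(k)}_{M}(u)\ :\ u(L)=0,\ u(0)\neq 0\Big\},\qquad
R^{(k)}_{M}(u)=\frac{\int_{0}^{L}\big(u'^{2}h^{n-1}+\lambda_{(k)}u^{2}h^{n-3}\big)\,dr}{u(0)^{2}} .
\]
Since $n\ge 3$, both weights $h^{n-1}$ and $h^{n-3}$ are nondecreasing in $h$; this is exactly the mechanism of Proposition \ref{comparison}. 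Consequently, if $h_{2}\ge h_{1}$ on a common interval $[0,L_{1}]$ with $h_{2}=h_{1}$ near the endpoints, then the admissible classes coincide, $R^{(k)}_{g_{2}}(u)\ge R^{(k)}_{g_{1}}(u)$ for every test function, and therefore $\sigma_{(k)}(M_{2})\ge\sigma_{(k)}(M_{1})$ simultaneously for all $k$.

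Next I would produce $h_{2}$ by a localized bump. Write $h_{2}=h_{1}+\phi$ with $\phi\ge 0$ a smooth function supported in an open interval $I\subset(0,L_{1})$ on which $h_{1}$ is strictly interior to all the constraints, that is $1-r<h_{1}(r)<1+r$ and $|h_{1}'(r)|<1$. Such an interval exists: $h_{1}$ cannot coincide with an affine function of slope $\pm 1$ on all of $[0,L_{1}]$, since that is incompatible with $h_{1}(0)=1$, $h_{1}(L_{1})=0$ and $h_{1}>0$ on $(0,L_{1})$, so the open set where none of the constraints $h=1\pm r$, $|h'|=1$ is saturated is nonempty. Taking $\phi$ small in $C^{1}$ preserves $1-r\le h_{2}\le 1+r$, the embedding condition $|h_{2}'|\le 1$, the positivity $h_{2}>0$ on $(0,L_{1})$, and the smooth closing at $r=L_{1}$; thus $M_{2}=\big([0,L_{1}]\times\mathbb{S}^{n-1},\,dr^{2}+h_{2}^{2}g_{0}\big)$ is again an embedded submanifold of revolution with the same boundary, while $h_{2}>h_{1}$ throughout $I$.

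For the strict inequality, which must hold for every $k\ge 1$ from the single manifold $M_{2}$, I would evaluate $R^{(k)}_{g_{2}}$ at its own minimizer $u_{2}=u_{2}^{(k)}$, the radial mode-$k$ eigenfunction of $M_{2}$. Then
\[
\sigma_{(k)}(M_{2})=R^{(k)}_{g_{2}}(u_{2})=R^{(k)}_{g_{1}}(u_{2})+\frac{1}{u_{2}(0)^{2}}\int_{I}\Big(u_{2}'^{2}\,(h_{2}^{n-1}-h_{1}^{n-1})+\lambda_{(k)}u_{2}^{2}\,(h_{2}^{n-3}-h_{1}^{n-3})\Big)\,dr ,
\]
and the added integral is strictly positive. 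Indeed $u_{2}$ solves a second order linear ODE with $\lambda_{(k)}>0$ (as $k\ge 1$), hence is real-analytic and nonconstant on $(0,L_{1})$, so $u_{2}'$ cannot vanish identically on $I$ and the term $u_{2}'^{2}(h_{2}^{n-1}-h_{1}^{n-1})$ already contributes a positive amount on a set of positive measure. Since $R^{(k)}_{g_{1}}(u_{2})\ge\sigma_{(k)}(M_{1})$, this yields $\sigma_{(k)}(M_{2})>\sigma_{(k)}(M_{1})$ for every $k\ge 1$.

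The two delicate points are precisely the last two, and both are structural rather than computational. The essential one is extracting the \emph{strict} inequality uniformly over all modes $k$ from one fixed $M_{2}$; this succeeds because the pointwise monotonicity of $R^{(k)}$ and the nonvanishing of the eigenfunction derivative $u_{2}'$ are insensitive to $k$. The secondary point is keeping $h_{2}$ inside the class of Euclidean submanifolds of revolution, i.e. compatible with $1-r\le h_{2}\le 1+r$, with $|h_{2}'|\le 1$, and with the smooth cap at $r=L_{1}$; this is handled by localizing $\phi$ where $h_{1}$ is strictly interior to every constraint and taking it sufficiently small in $C^{1}$.
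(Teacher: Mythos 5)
Your overall strategy is genuinely different from the paper's (which enlarges the manifold to $[0,2L_1+2]\times\mathbb S^{n-1}$ with $h_2=1+r$ up to $r=L_1$, and then compares via the mixed Steklov--Neumann problem on a collar $[0,L_1-\epsilon]\times\mathbb S^{n-1}$ together with a convergence result of Ann\'e), and the parts of your argument concerning the one-dimensional Rayleigh quotient, its monotonicity in $h$ for $n\ge 3$, and the strictness obtained by testing $R^{(k)}_{g_1}$ on the minimizer $u_2$ are sound and in fact cleaner than the paper's limiting argument. However, there is a genuine gap in the construction of $M_2$.

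The gap is your claim that there exists an open interval $I\subset(0,L_1)$ on which $1-r<h_1(r)<1+r$ and $|h_1'(r)|<1$. Your justification --- that $h_1$ cannot coincide with an affine function of slope $\pm1$ on all of $[0,L_1]$ because this would be incompatible with $h_1(0)=1$, $h_1(L_1)=0$ and $h_1>0$ on $(0,L_1)$ --- is false: the function $h_1(r)=1-r$ on $[0,1]$ satisfies all of these conditions and is exactly the profile of the flat ball $\mathbb B^n$, the central example of the theory (it is the equality case of the lower bound of \cite{CGG}). For this $M_1$ one has $|h_1'|\equiv 1$, so no interval $I$ as required exists, and the failure is not merely in your existence argument but in the strategy itself: any nonnegative bump $\phi\not\equiv 0$ with $\phi(0)=\phi(L_1)=0$ must satisfy $\phi'<0$ on a set of positive measure, whence $h_2'=-1+\phi'<-1$ there and $M_2$ is no longer a Euclidean submanifold of revolution. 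More structurally, if $h_2\ge 1-r$, $h_2(0)=1$, $h_2(L_2)=0$ and $|h_2'|\le 1$ with $h_2>1-r$ somewhere, then necessarily $L_2>1=L_1$; so no perturbation of the flat ball on the \emph{same} interval $[0,L_1]$ can work, and lengthening the manifold (as the paper does) is unavoidable. Your argument goes through verbatim for every $M_1$ other than the flat ball (for those, continuity of $h_1'$ does give an open interval where $|h_1'|<1$, and on such an interval $h_1<1+r$ automatically), so the proof can be repaired by treating $\mathbb B^n$ separately --- e.g.\ by first applying the paper's lengthening construction to it --- but as written the proposition is not proved.
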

\begin{proof}
Note that $M_{1}$ will be of the form $[0,L_{1}]\times \mathbb{S}^{n-1}$ with metric $g_{1} = dr^{2} + h_{1}^{2}(r) g_{\mathbb{S}^{n-1}}$, where $h_{1}$ satisfies $h_{1}(0)=1$, $\vert h_{1}'(r) \vert \leq 1$ and $h_{1}(L_{1})=0$. The condition $\vert h_{1}'(r) \vert \leq 1$ gives $ 1-r \leq h_{1}(r) \leq 1+r $.

\noindent Consider a submanifold of revolution $M_{2} = [0,L_{2}] \times \mathbb{S}^{n-1}$ with metric $g_{2} = dr^{2} + h_{2}^{2}(r) g_{\mathbb{S}^{n-1}}$, where $L_{2}= 2 L_{1} + 2$ and
\begin{align*}
h_{2}(r)=
\begin{cases}
1+r, & \text{ if } r \leq L_{1}, \\
L_{2} - r, & \text{ if } L_{1} +1\leq r \leq L_{2}.
\end{cases}
\end{align*}
For $L_1\le r \le L_1+1$, we just ask that $h$ joins $h(L_1)$ and $h(L_1+1)$ smoothly. Note that $h_{1}(r) \le h_{2}(r)$ for $r \le L_{1}$.

\noindent Now consider the mixed Steklov-Neumann problem on $\tilde{M} = [0,L_{1}- \epsilon] \times \mathbb{S}^{n-1}$ with two metrics $g_{1}$ and $g_{2}$. Then from Proposition \ref{prop: multiplicity of the mixed Steklov} and \ref{comparison}, we get $\sigma_{(k)}^{N}(\tilde{M}, g_{1}) < \sigma_{(k)}^{N}(\tilde{M}, g_{2})$ for all $k\ge 1 $. The strict inequality follows from Proposition \ref{comparison} applied to eigenfunctions of $(\tilde{M}, g_{1})$ and from the fact that there exist points in $\tilde{M}$ such that $h_2(r)>h_1(r)$ at those points.

\noindent Recall that because of the bracketing,
$$\sigma_{(k)}(M_{2}, g_{2}) \geq \sigma_{(k)}^{N}(\tilde{M}, g_{2}), \quad k \in \mathbb{N}\cup \{0\}.$$
Using the method of Anné (see \cite{An1}, Theorem 2, and \cite{An2} for a less general but easiest version of the result), we have that as $\epsilon \rightarrow 0$, $\sigma_{(k)}^{N}(\tilde{M}, g_{1}) \rightarrow \sigma_{(k)}(M_{1}, g_{1})$. As a consequence, we get $\sigma_{(k)}(M_{2}, g_{2}) > \sigma_{(k)}(M_{1}, g_{1}) $.
\end{proof}

Next we prove Theorem \ref{main} by using Proposition \ref{increase}.
\begin{proof}
Note that $M$ will be of the form $[0,L] \times \mathbb{S}^{n-1}$ with metric $g = dr^{2} + h^{2}(r) g_{\mathbb{S}^{n-1}}$, where $h$ satisfies $h(0)=1$, $\vert h'(r) \vert \leq 1$ and $h(L)=0$.

\noindent Proposition \ref{increase} already shows that it is always possible to strictly increase the spectrum of $M$. Moreover, Proposition \ref{increase} gives the existence of a sequence of submanifolds of revolution $M_{i} = [0,L_{i}] \times \mathbb{S}^{n-1}$, $1 \leq i < \infty$, with boundary $\mathbb{S}^{n-1} \times \{ 0\} $ and metric $g_{i} = dr^{2} + h_{i}^{2}(r) g_{\mathbb{S}^{n-1}}$ ($h_{i}$ and $L_{i}$ are constructed as in Proposition \ref{increase}) such that
\begin{align*}
\sigma_{(k)}(M) < \sigma_{(k)}(M_{1}) < \sigma_{(k)}(M_{2}) < \cdots.
\end{align*}
Also, for $i \geq 2$,
\begin{align*}
\sigma^{N}_{(k)}(A_{i}) \leq \sigma_{(k)}(M_{i}) \leq \sigma^{D}_{(k)}(A_{i}),
\end{align*}
where $A_{i}$ is an annular domain with inner radius one and outer radius $1+ L_{i-1}$, and it is a neighborhood of the boundary of $M_{i}$.

\noindent Moreover, we have $L_i \to \infty$ as $i\to \infty$. Note that for $k>0$,
\begin{align*}
\lim_{i \rightarrow \infty} \sigma^{D}_{(k)}(A_{i}) = \lim_{i \rightarrow \infty} \sigma^{N}_{(k)}(A_{i}) = k+n-2.
\end{align*}
This shows $\lim_{i \rightarrow \infty} \sigma_{(k)}(M_{i}) = k+n-2$. Combining this with the fact that $\sigma_{(k)}(M_{i})$ is an increasing sequence proves the theorem.
\end{proof}

\subsection*{Acknowledgment}
The authors sincerely thank Dr. Katie Gittins for her valuable comments and suggestions.

\addcontentsline{toc}{chapter}{Bibliography}
\bibliographystyle{plain}
\bibliography{biblioCV}

\begin{thebibliography}{1}

\bibitem{An2}
Colette Ann\'{e}.
\newblock Perturbation du spectre {$X - TUB^\epsilon (Y)$} (conditions de
  {N}eumann).
\newblock In {\em S\'{e}minaire de {T}h\'{e}orie {S}pectrale et
  {G}\'{e}om\'{e}trie, {N}o. 4, {A}nn\'{e}e 1985--1986}, pages 17--23. Univ.
  Grenoble I, Saint-Martin-d'H\`eres, 1986.

\bibitem{An1}
Colette Ann\'{e}.
\newblock Spectre du laplacien et \'{e}crasement d'anses.
\newblock {\em Ann. Sci. \'{E}cole Norm. Sup. (4)}, 20(2):271--280, 1987.

\bibitem{CGG}
B.~Colbois, A.~Girouard, and K.~Gittins.
\newblock Steklov eigenvalues of submanifolds with prescribed boundary in
  {E}uclidean space.
\newblock {\em J. Geom. Anal.}, 29(2):1811--1834, 2019.

\bibitem{CEG2}
Bruno Colbois, Ahmad El~Soufi, and Alexandre Girouard.
\newblock Compact manifolds with fixed boundary and large {S}teklov
  eigenvalues.
\newblock {\em Proc. Amer. Math. Soc.}, 147(9):3813--3827, 2019.

\bibitem{CGH}
Bruno Colbois, Alexandre Girouard, and Asma Hassannezhad.
\newblock The {S}teklov and {L}aplacian spectra of {R}iemannian manifolds with
  boundary.
\newblock {\em J. Funct. Anal.}, 278(6):108409, 2020.

\bibitem{FS4}
Ailana Fraser and Richard Schoen.
\newblock Shape optimization for the {S}teklov problem in higher dimensions.
\newblock {\em Adv. Math.}, 348:146--162, 2019.

\bibitem{Ve}
Sheela Verma.
\newblock Bounds for the {S}teklov eigenvalues.
\newblock {\em Arch. Math. (Basel)}, 111(6):657--668, 2018.

\bibitem{Xi1}
Changwei Xiong.
\newblock On the spectra of three {S}teklov eigenvalue problems on warped
  product manifolds.
\newblock {\em arXiv:1902.00656}.

\bibitem{Xi}
Changwei Xiong.
\newblock Comparison of {S}teklov eigenvalues on a domain and {L}aplacian
  eigenvalues on its boundary in {R}iemannian manifolds.
\newblock {\em J. Funct. Anal.}, 275(12):3245--3258, 2018.

\end{thebibliography}

\bigskip
\normalsize
\noindent Bruno Colbois \\
Universit\'e de Neuch\^atel, Institut de Math\'ematiques \\
Rue Emile-Argand 11\\
 CH-2000, Neuch\^atel, Suisse

\noindent bruno.colbois@unine.ch

\medskip

\normalsize
\noindent
Sheela Verma\\
Tata Institute of Fundamental Research\\
Centre For Applicable Mathematics\\
Bangalore, India

\noindent sheela.verma23@gmail.com
\end{document}